\newtheorem{theorem}{Theorem}[section]
\theoremstyle{definition}
\newtheorem{proposition}[theorem]{Proposition}
\newtheorem{lemma}[theorem]{Lemma}
\newtheorem{remark}[theorem]{Remark}
\def\BN{\mathbbm N}
\def\BZ{\mathbbm Z}
\def\BQ{\mathbbm Q}
\def\BT{\mathbbm T}
\def\BE{\mathbbm E}
\def\calT{\mathcal T}
\def\calS{\mathcal S}
\def\ID{I_{\Delta}}
\def\JD{J_{\Delta}}
\def\d{\delta}
\def\be{\begin{equation}}
\def\ee{\end{equation}}
\def\IFKB{I^{\mathrm{FKB}}}
\def\TV{\mathrm{TV}}
\def\Ss{\Sigma}
\def\FKB{\mathrm{FKB}}
\newcommand{\qbinom}[2]{\genfrac{[}{]}{0pt}{}{#1}{#2}}
\def\U{\mathrm{U}}
\def\Tet{\mathrm{Tet}}
\def\lt{\mathrm{lt}}
\def\wh{\widehat}
\def\ev{\mathrm{ev}}
\begin{document}
\title[The FKB invariant is the 3d index]{The FKB invariant is the 3d index}
\author{Stavros Garoufalidis}
\address{
  International Center for Mathematics, Department of Mathematics \\
  Southern University of Science and Technology \\
  Shenzhen, China \newline
  {\tt \url{http://people.mpim-bonn.mpg.de/stavros}}}
\email{stavros@mpim-bonn.mpg.de}
\author{Roland van der Veen}
\address{Bernoulli Institute \\
  University of Groningen \\
  P.O. Box 407, 9700 AK Groningen \\
  The Netherlands \newline
         {\tt \url{http://www.rolandvdv.nl}}}
\email{r.i.van.der.veen@rug.nl}
\thanks{
{\em Key words and phrases:}
Ideal triangulations, spines, 3-manifolds, normal surfaces, TQFT,
ideal tetrahedron, 3D-index, Turaev-Viro invariants, quantum 6j-symbols,
tetrahedron index.
}

\date{28 February 2020}

\begin{abstract}
  We identify the $q$-series associated to an 1-efficient ideal triangulation
  of a cusped hyperbolic 3-manifold by Frohman and Kania-Bartoszynska with
  the 3D-index of Dimofte-Gaiotto-Gukov. This implies the topological
  invariance of the $q$-series of Frohman and Kania-Bartoszynska for
  cusped hyperbolic 3-manifolds. Conversely, we identify the
  tetrahedron index of Dimofte-Gaiotto-Gukov as a limit of quantum 6j-symbols. 
\end{abstract}

\maketitle

{\footnotesize
\tableofcontents
}


\section{Introduction}
\label{sec.intro}

In their seminal paper, Turaev-Viro~\cite{TV} defined topological
invariants of triangulated 3-manifolds using state sums whose building block
are the quantum 6j-symbols at roots of unity. An extension of the Turaev-Viro
invariants to ideally triangulated 3-manifolds was given by
Turaev~\cite{Turaev:shadow1,Turaev:shadow1} and
Benedetti-Petronio~\cite{BP:roberts}.

In~\cite{FK} Frohman and Kania-Bartoszynska (abbreviated by FKB)
aimed to construct topological
invariants of ideally triangulated 3-manifolds away from roots of unity, and
with this goal in mind, they studied some limits of quantum 6j-symbols
and associated analytic functions to suitable ideal triangulations. Their
results apply to compact, oriented 3-manifolds with arbitrary boundary, but
for simplicity, throughout our paper, we will assume that $M$ is a compact,
oriented 3-manifold with torus boundary components. In that case, FKB
assigned to an 1-efficient ideal triangulation $\calT$
of such a 3-manifold $M$ a formal power series $\IFKB_\calT(q) \in \BZ[[q]]$
which turns out to be analytic in the open unit disk $|q|<1$ and which is a
generating series of suitable closed oriented surfaces carried by the
spine associated to $\calT$. FKB did not prove
that their building block satisfies the 2--3 Pachner moves of 1-efficient
triangulations, although this, together with the conjectured topological
invariance, is implicit in their work. 

In a different direction, in~\cite{DGG1,DGG2} Dimofte-Gaiotto-Gukov
(abbreviated DGG) studied
the index of a superconformal $N=2$ gauge theory via a 3d-3d correspondence.
Using as a building block an explicit formula for the partition function
$\ID$ of an ideal tetrahedron, they associated an invariant
$I_\calT(m,e)(q) \in \BZ[[q^{1/2}]]$ to a suitable ideal triangulation $\calT$
of a 3-manifold $M$ where the labels $(m,e)$ are elements of
$H_1(\partial M, \BZ)$. The construction of DGG is
predicted by physics to be a topological invariant, and indeed DGG proved
that their invariant is unchanged under suitable 2--3 Pachner moves. 

It turns out that the ideal triangulations with a well-defined 3D-index
are exactly those that satisfy a combinatorial PL condition known as an
index structure (see~\cite[Sec.2.1]{Ga:index}), and, equivalently,
those that are
1-efficient (see~\cite{GHRS}). Moreover, in~\cite{GHRS}, it was shown that the
3D-index of an 1-efficient triangulation gives rise to an invariant of a
cusped hyperbolic 3-manifold $M$ (with nonempty boundary). 

Thus, 1-efficient ideal triangulations is a common feature of the work of
FKB and DGG. A second common feature is the presence of
(generalized) normal surfaces.
On the one hand, the FKB invariant is a generating series of suitable
surfaces carried by the spine of an ideal triangulation $\calT$. 
On the other hand, it was shown in~\cite{Ga:normal} that the 3D-index
can be written as the generating series of generalized spun normal surfaces,
(these are surfaces that intersect each ideal tetrahedron in polygonal disks)
where the latter are encoded by their quadrilateral coordinates.

Given these coincidences, it is not surprising that the invariants
of ideal triangulations of~\cite{FK} and~\cite{DGG1,DGG2} coincide.

\begin{theorem}
  \label{thm.1}
  If $\calT$ is an 1-efficient triangulation, then for all elements $(m,e)
  \in H_1(\partial M,\BZ)$ we have:
  \be
  \label{eq.thm1}
  \IFKB_\calT(m,e)(q) = I_\calT(m,e)(q) \,.
  \ee
  \end{theorem}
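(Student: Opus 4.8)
The plan is to write both $q$-series as generating functions over the same set of (generalized) normal surfaces, to factor each as a product of local tetrahedral contributions, and then to verify that these local contributions agree. First I would exploit the duality between the ideal triangulation $\calT$ and its dual spine. As recalled above, the FKB invariant is a generating series of closed oriented surfaces carried by the spine, whereas by~\cite{Ga:normal} the $3$D-index is a generating series of generalized spun normal surfaces in $\calT$ recorded in quadrilateral coordinates. Since the spine is the $2$-skeleton dual to $\calT$, a surface carried by the spine corresponds to a normal surface meeting each tetrahedron in polygonal disks; I would make this correspondence explicit and check that it matches the two admissibility (edge-gluing) conditions as well as the two boundary classes in $H_1(\partial M,\BZ)$, so that the labels $(m,e)$ correspond on the two sides.

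Second, both state sums factor over the tetrahedra of $\calT$: each contributes a local weight, and there are global edge factors enforcing the normal-surface equations. For DGG this local weight is by definition the tetrahedron index $\ID$, evaluated on the quadrilateral data of the surface. For FKB the local weight is a limiting value of a quantum $6j$-symbol attached to the corresponding tetrahedron of the spine. The heart of the argument is therefore the local identity asserting that the appropriate limit of the quantum $6j$-symbol equals $\ID$ after the change of variables dictated by the duality of the first step; this is precisely the converse statement announced in the abstract, which I would isolate as a separate lemma. To prove it I would begin from an explicit $q$-hypergeometric formula for the normalized $6j$-symbol (dividing out the theta/vertex factors) and take the limit in which the colorings tend to infinity while their pairwise differences stay fixed, these differences being exactly the quadrilateral coordinates. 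After a $q$-binomial rearrangement the surviving sum should reduce to the defining series of $\ID$, an alternating sum of ratios of $q$-Pochhammer symbols weighted by a quadratic power of $q$.

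I expect the limiting computation to be the main obstacle: one must control the asymptotics of the $q$-Pochhammer symbols and of the quadratic exponents uniformly in the summation index, isolate the finitely many terms that survive, and match the resulting powers of $q$ and signs to $\ID$ exactly, including the half-integer shifts carried by $(m,e)$. A secondary, more bookkeeping obstacle is to verify that the edge-gluing factors and the overall normalizations---the vertex weights of the Turaev--Viro-type state sum on the FKB side versus the quadratic form governing the $3$D-index on the DGG side---cancel consistently once the local identity is available, so that the two full generating series, and not merely their tetrahedral building blocks, agree term by term over the corresponding surfaces.
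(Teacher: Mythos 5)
Your proposal follows essentially the same route as the paper: both invariants are written as generating series over matched surfaces (unpeelable spinal surfaces of the spine versus generalized normal surfaces in quad coordinates, which is the paper's Lemma~\ref{lem.bijection}), and the proof reduces to the local identity that the stable limit of the quantum $6j$-symbol equals the tetrahedron index, which the paper establishes by splitting it into Proposition~\ref{prop.stab} (degree estimates giving $\JD^{\FKB}$) and Proposition~\ref{prop.1} (the $q$-series identity $\JD^{\FKB}=\JD$, proved via symmetry, the translation property, and two classical $q$-hypergeometric identities). Your ``$q$-binomial rearrangement'' is exactly the content of that last proposition, so the plan is correct and matches the paper's argument in structure and in its key lemma.
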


It follows that $\IFKB$ is a topological invariant of cusped hyperbolic
3-manifolds.

Theorem~\ref{thm.1} follows from the fact that both invariants can be
expressed as generating series of surfaces whose with local weights and
the weights match (see Proposition~\ref{prop.1} below).
Recall that the tetrahedron index is given by~\cite{DGG1} 
\be
\label{eq.ID}
  I_\Delta(m,e)(q) =
  \sum_{n}(-1)^n\frac{q^{\frac{1}{2}n(n+1)-(n+\frac{1}{2}e)m}}{(q)_n(q)_{n+e}} 
\ee
where, for a natural number $n$, we define $(q;q)_n = \prod_{j=1}^n (1-q^j)$
and the summation in~\eqref{eq.ID} is over the integers $n \geq \max\{0,-e\}$. 
Although the tetrahedron index is a function of a pair of integers, 
it can be presented as a function of three
variables $a,b,c \in \BZ$~\cite[Eqn.(8)]{Ga:normal} by
\be
\label{Isym}
\JD(a,b,c)= (-q^{\frac{1}{2}})^{-b} \ID(b-c,a-b)
= (-q^{\frac{1}{2}})^{-c} \ID(c-a,b-c) = (-q^{\frac{1}{2}})^{-a} \ID(a-b,c-a).
\ee
Then $\JD(a,b,c)$ is invariant under all permutations of its arguments $a,b,c$
and satisfies the translation property
\be
\label{It}
\JD(a,b,c) = (-q^{\frac{1}{2}})^{s} \JD(a+s,b+s,c+s)  
\text{ for all } s \in \BZ \,.
\ee
The leading term of $\JD(a,b,c)$ is given by
$(-q^{\frac{1}{2}})^{\nu(a,b,c)}$ (see Eqn.(8) of~\cite{Ga:normal}) where
\be
\label{eq.nu}
\nu(a,b,c)=
a^ *b^* + a^* c^* + b^* c^* - \min\{a,b,c\}
\ee
where $a^*=a-\min\{a,b,c\}$, $b^*=b-\min\{a,b,c\}$ and
$c^*=c-\min\{a,b,c\}$.

Consider the function
\be
\label{eq.JFK}
\JD^{\FKB}(a,b,c) =
(q)_\infty  \sum_{n} (-1)^n 
\frac{q^{\frac{1}{2}n(3n+1) +n(a+b+c)+\frac{1}{2}(ab+bc+ca)}}{
  (q;q)_{n+a}(q;q)_{n+b}(q;q)_{n+c}}
\ee
for integers $a,b$ and $c$, where the summation is over the integers
(with the understanding that $(q;q)_m=\infty$ when $m<0$), or alternatively
over the integers $n \geq -\min\{a,b,c\}$.
FKB identify the above function 
as a limit of quantum 6j-symbols. It turns out that the
limit is equivalent to the stabilization of the coefficients of the quantum
6j-symbols, and the latter follows from degree estimates.
To state our result, consider the building blocks $\Theta$ and $\Tet$
(functions of three and six integer variables, respectively)
whose definition is given explicitly in Equations~\eqref{eq.b4} and
~\eqref{eq.b5} of Section~\ref{sub.blocks}. Denote by $\wh \Theta$ and
$\wh \Tet$ the shifted versions defined in Section~\ref{sub.stab}. Then we
have the following.

\begin{proposition}
  \label{prop.stab}
  We have: 
  \begin{align}
    \label{eq.3jlim}
    \lim_{N\to \infty} \wh \Theta (a+2N,b+2N,c+2N) &=
    \frac{1}{1-q} \frac{1}{(q;q)_\infty^2} \\
    \label{eq.6jlim}
    \lim_{N\to \infty} \wh \Tet \begin{pmatrix} a +2N & b+2N & e+2N
      \\ d +2N & c + 2N & f + 2N \end{pmatrix}&=
(-q^{-\frac{1}{2}})^{\nu(S_1^*,S_2^*,S_3^*)}                              
\frac{1}{(1-q)(q;q)_\infty^4} \JD^\FKB(S_1^*,S_2^*,S_3^*) 
  \end{align}
  where $S_i$ are given in~\eqref{eq.Sj}, $S^*=\min\{S_1,S_2,S_3\}$
  and $S_i^* = S_i-S^*$.  
\end{proposition}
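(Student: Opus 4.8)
The plan is to reduce both limits to a single mechanism: rewrite the quantum factorials occurring in the building blocks $\Theta$ and $\Tet$ of \eqref{eq.b4} and \eqref{eq.b5} in terms of $q$-Pochhammer symbols, and then exploit the $q$-adic convergence $(q;q)_m \to (q;q)_\infty$ as $m \to \infty$. The elementary identity
\[
  [n]! = q^{-\frac14 n(n-1)} \frac{(q;q)_n}{(1-q)^n},
\]
obtained from $[n] = q^{-(n-1)/2}(1-q^n)/(1-q)$, turns every factorial into a $q$-Pochhammer symbol times explicit powers of $q^{1/2}$ and of $(1-q)$. The shifted symbols $\wh\Theta$ and $\wh\Tet$ of Section~\ref{sub.stab} are arranged precisely so that the $N$-dependent and leading-order powers of $q$ are divided out, leaving a finite limit.

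First I would record the effect of the shift $x \mapsto x+2N$ on the combinatorial data: each triangular face-sum grows by $3N$ while the three opposite-edge sums $S_i$ of \eqref{eq.Sj} grow by a common amount, so that the reduced quantities $S_i^* = S_i - \min_j S_j$ are independent of $N$, and for $N$ large every admissibility constraint is met. Reindexing the summation in \eqref{eq.b5} from the top, $s = \min_j S_j - m$ with $m \ge 0$, the four face factorials $[s-a_i]!$ and the factorial $[s+1]!$ all have arguments tending to $\infty$, while the three square factorials $[S_i - s]!$ keep the bounded offsets $S_i^*$. In the limit the former become $(q;q)_\infty$ -- producing the $(q;q)_\infty^4$ in the denominator together with one further $(q;q)_\infty$ -- and the latter become $(q;q)_{m+S_i^*}$; this is exactly the shape of the summand of $\JD^{\FKB}(S_1^*,S_2^*,S_3^*)$ in \eqref{eq.JFK}. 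The analogous, simpler computation for $\wh\Theta$, where no summation is present, yields \eqref{eq.3jlim}.

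Next I would carry out the bookkeeping of the $q^{1/2}$-powers. Collecting all exponents coming from the conversion identity, from the sign $(-1)^s$, and from the prefactor of $\Tet$, I would check that after cancellation they reproduce both the quadratic exponent $\tfrac12 m(3m+1) + m(S_1^*+S_2^*+S_3^*) + \tfrac12(S_1^*S_2^*+S_2^*S_3^*+S_3^*S_1^*)$ of \eqref{eq.JFK} and the external normalization $(-q^{-1/2})^{\nu(S_1^*,S_2^*,S_3^*)}$ with $\nu$ as in \eqref{eq.nu}, the residual factor $\tfrac1{1-q}$ being the last surviving power of $(1-q)$. I expect this power-counting to be the most delicate routine step.

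The heart of the argument, and the main obstacle, is the stabilization claim: for each fixed $k$ the coefficient of $q^k$ in $\wh\Tet$ must be eventually independent of $N$ and equal to that of the limit. This is where the degree estimates enter. I would show that the $q$-valuation of the $m$-th summand grows quadratically in $m$, uniformly in $N$; this simultaneously proves that $\JD^{\FKB}$ converges $q$-adically, that only the finitely many $m \lesssim \sqrt k$ contribute to the coefficient of $q^k$, and that the finite sum with $\sim N$ terms already captures every such contribution once $N$ is large. Together with the coefficientwise convergence of the $N$-dependent factorials, which holds because $(q;q)_{m'} \equiv (q;q)_\infty \pmod{q^{m'+1}}$ with $m' \to \infty$, this establishes the termwise convergence asserted in \eqref{eq.6jlim}.
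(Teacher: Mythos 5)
Your proposal is correct and is essentially the paper's own proof: convert the building blocks to $q$-Pochhammer form via $[n]!=q^{-n(n-1)/4}(q;q)_n/(1-q)^n$, reindex the tetrahedral sum from its top end ($s=S^*-m$, exactly the paper's substitution $k=S^*-\ell$) so that only the three quadrilateral factorials retain the bounded arguments $S_i^*+m$ while every other Pochhammer tends to $(q;q)_\infty$, recovering the summand of $\JD^{\FKB}(S_1^*,S_2^*,S_3^*)$. If anything, your last paragraph—justifying the interchange of the $N\to\infty$ limit with the sum of $\sim N$ terms by the uniform-in-$N$ quadratic growth of the $q$-valuation in $m$—is more careful than the paper, which simply applies the termwise limits \eqref{basic} ``to each fixed $\ell$'' without further comment.
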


Observe that the quantum $6j$-symbols depend on six parameters (one
per edge of the tetrahedron) while its limit given by~\eqref{eq.6jlim}
depends only on three parameters (one for each quadrilateral of the
tetrahedron), and a further symmetry reduces the dependence to two
parameters (obtained by ignoring one of the three quadrilateral types
of the tetrahedron). 

The next proposition identifies the tetrahedron index of~\cite{DGG1,DGG2}
as a limit of quantum 6j-symbols. 

\begin{proposition}
\label{prop.1}
For integers $a,b$ and $c$ we have: 
\be
\label{eq.prop1}
\JD^{\FKB}(a,b,c) =
\JD(a,b,c) \,.
\ee
\end{proposition}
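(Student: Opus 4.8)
The plan is to reduce the identity to a single terminating $q$-summation by exploiting the symmetries that both sides share. First I would record that $\JD^{\FKB}$ is manifestly invariant under all permutations of $(a,b,c)$, since its summand depends on $a,b,c$ only through $a+b+c$, through $ab+bc+ca$, and through the symmetric product $(q;q)_{n+a}(q;q)_{n+b}(q;q)_{n+c}$; the corresponding symmetry of $\JD$ is already stated. A direct shift of the summation index $n\mapsto n-s$ shows moreover that $\JD^{\FKB}(a+s,b+s,c+s)=(-q^{\frac12})^{-s}\JD^{\FKB}(a,b,c)$, which is exactly the translation rule \eqref{It} obeyed by $\JD$. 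Consequently it suffices to prove \eqref{eq.prop1} on one representative of each orbit, and by permutation symmetry together with translation I may assume $c=\min\{a,b,c\}=0$ with $a,b\ge 0$.

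With $c=0$ the right-hand side collapses to a single sum: the second expression in \eqref{Isym} gives $\JD(a,b,0)=\ID(-a,b)=\sum_{n\ge0}(-1)^n q^{\frac12 n(n+1)+an+\frac12 ab}/\big((q)_n(q)_{n+b}\big)$, while the left-hand side is $(q)_\infty$ times the single sum with denominator $(q;q)_{n+a}(q;q)_{n+b}(q;q)_n$. After cancelling the common factor $q^{\frac12 ab}$, the claim becomes the purely $q$-hypergeometric identity
\[
(q)_\infty\sum_{n\ge0}(-1)^n\frac{q^{\frac32 n^2+\frac12 n+(a+b)n}}{(q;q)_{n+a}(q;q)_{n+b}(q;q)_n}
=\sum_{m\ge0}(-1)^m\frac{q^{\frac12 m(m+1)+am}}{(q;q)_m(q;q)_{m+b}}.
\]

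To prove this I would eliminate the cubic Gaussian on the left. Writing $(q)_\infty/(q;q)_{n+a}=(q^{n+a+1};q)_\infty$ and expanding the latter by Euler's identity $(q^{n+a+1};q)_\infty=\sum_{j\ge0}(-1)^j q^{\binom{j}{2}+j(n+a+1)}/(q)_j$ turns the left-hand side into a double sum. Interchanging the two summations (legitimate since, with $a,b\ge0$, the $n$-th term has $q$-order $\sim\tfrac32 n^2$, so each power of $q$ receives only finitely many contributions) and collecting terms along $m=n+j$, a direct computation of the exponent yields $\binom{m+1}{2}+am+n^2+bn$ together with sign $(-1)^m$. Matching with the right-hand side then reduces everything to the finite summation
\[
\sum_{n=0}^{m}\qbinom{m}{n}\frac{q^{n^2+bn}}{(q^{b+1};q)_n}=\frac{1}{(q^{b+1};q)_m},
\]
a terminating $q$-Chu--Vandermonde (equivalently, Cauchy $q$-binomial) summation that I would verify by induction on $m$ or cite from Gasper--Rahman.

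I expect the main obstacle to be this third step: spotting that expanding precisely the factor $(q)_\infty/(q;q)_{n+a}$ by Euler's identity is what converts the cubic Gaussian $q^{\frac32 n^2}$ into the quadratic Gaussian $q^{\frac12 m^2}$ of the tetrahedron index, and then recognizing the resulting inner sum as a known closed-form summation. Once the reorganization is found, the bookkeeping of exponents and the convergence justification are routine, and the permutation symmetry and translation rule of the first step do the remaining work of upgrading the special case $c=0$, $a,b\ge0$ to the full statement.
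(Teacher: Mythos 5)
Your proposal is correct and follows essentially the same route as the paper's proof: reduce to $c=0$ via symmetry and the translation property, expand $(q)_\infty/(q;q)_{n+a}$ by Euler's identity, reorganize the double sum along $m=n+j$, and collapse the inner sum by a terminating summation. Your final ``$q$-Chu--Vandermonde'' identity is exactly the Durfee-rectangle identity $\frac{1}{(q)_m(q)_n}=\sum_{r+s=m,\, s+t=n}\frac{q^{rt}}{(q)_r(q)_s(q)_t}$ that the paper cites from Zagier, so the two arguments coincide up to swapping the roles of $a$ and $b$.
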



\section{A review of~\cite{TV} and~\cite{FK}}
\label{sec.FK}

\subsection{The building blocks}
\label{sub.blocks}

In this section we review the construction of the Turaev-Viro invariant
and the results of~\cite{FK}. Those invariants use some building blocks
whose definition we recall now. Note that the normalization of the building
blocks is not standard in the literature, and we 
will use the standard definitions of the building blocks that can
be found in ~\cite{KL} and also in~\cite{MV}. 
Recall the {\em quantum integer} $[n]$ and the {\em quantum factorial}
$[n]!$ of a natural number $n$ are defined by
$$
[n]=\frac{q^{n/2}-q^{-n/2}}{q^{1/2}-q^{-1/2}},
\qquad
[n]!=\prod_{k=1}^n [k]!
$$
with the convention that $[0]!=1$. Let 
$$
\qbinom{a}{a_1, a_2, \dots, a_r}=\frac{[a]!}{[a_1]! \dots [a_r]!}
$$
denote the multinomial coefficient of natural numbers $a_i$ such that
$a_1+ \dots + a_r=a$. We say that a triple $(a,b,c)$ of natural
numbers is {\em admissible} if $a+b+c$ is even and the triangle inequalities
hold. In the formulas below, we use the following basic trivalent graphs
$\U,\Theta,\Tet$ colored by one, three and six natural numbers (one
in each edge of the corresponding graph) such that
the colors at every vertex form an admissible triple shown in 
Figure~\ref{f.3j6j}.

\begin{figure}[!htpb]
\begin{center}
\includegraphics[height=0.10\textheight]{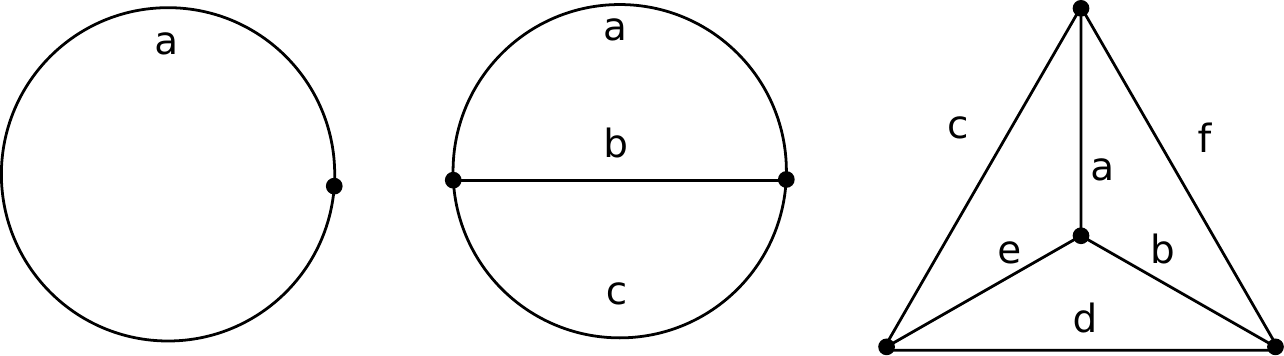}
\end{center}
\caption{The Unknot, the $\Theta$ graph and the tetrahedron.}
\label{f.3j6j}
\end{figure}

Let us define the following functions.


\begin{subequations}
\begin{align}
  \label{eq.b3}
  \U(a)&=
  (-1)^a[a+1] \\
  \label{eq.b4}
  \Theta(a,b,c)&=
  (-1)^{\frac{a+b+c}{2}}[\frac{a+b+c}{2}+1]
\qbinom{\frac{a+b+c}{2}}{\frac{-a+b+c}{2}, \frac{a-b+c}{2}, \frac{a+b-c}{2}}
\\
  \label{eq.b5}
  \Tet\begin{pmatrix} a & b & e \\ d & c & f \end{pmatrix}&=
  \sum_{k = T^+}^{S^*} (-1)^k [k+1]
        \qbinom{k}{S_1-k , S_2-k , S_3-k , k- T_1 , k- T_2 , k- T_3 , k- T_4}
\end{align}
\end{subequations}
where
\begin{equation}
\label{eq.Sj}
S_1 = \frac{1}{2}(a+d+b+c)\qquad S_2 = \frac{1}{2}(a+d+e+f) 
\qquad S_3 = \frac{1}{2}(b+c+e+f)
\end{equation}
\begin{equation}
\label{eq.Ti}
T_1 = \frac{1}{2}(a+b+e) \qquad T_2 = \frac{1}{2}(a+c+f)
\qquad T_3 = \frac{1}{2}(c+d+e) \qquad T_4 = \frac{1}{2}(b+d+f) 
\end{equation}
and
\be
\label{ST}
S^* = \min\{S_1,S_2,S_3 \}, \qquad T^+ = \max\{T_1,T_2,T_3,T_4 \} \,.
\ee

\subsection{The Turaev-Viro invariant}
\label{sub.TV}

Suppose $M$ is a
3-manifold as in our introduction, $\calT$ is an ideal triangulation of
$M$ and $X$ is the corresponding simple spine of $\calT$, i.e., the dual
2-skeleton of $\calT$. Let $V(X)$, $E(X)$ and $F(X)$ denote the vertices,
edges and faces of $X$. 

A admissible coloring $c: F(X)\to \BN$ of $X$ is an assignment of
natural numbers at each face of $X$ such that at each edge of $X$ the
sum of the three colors are even, and they satisfy the triangle inequality.
An admissible coloring $c$ determines a 6-tuple $(a_v,b_v,c_c,d_v,e_v,f_v)$
of integers at each vertex $v$ of $X$, a 3-tuple $(a_e,b_e,c_e)$ of integers
at each edge $e$ of $X$ and an integer $u_f$ at each face $f$ of $X$.

If $r$ is a natural number, a coloring $c$ is $r$-admissible if the sum
of the colors at each edge is $\leq 2(r-2)$. Let $\zeta_{r}$ denote a
primitive $r$th root of unity. Turaev-Viro~\cite{TV} define an invariant
\be
\label{eq.TVr}
\TV_X(\zeta_{r}) = \ev_{\zeta_{r}} \sum_{c} \prod_{v \in V(X)}
\Tet\begin{pmatrix} a_v & b_v & e_v \\ d_v & c_v & f_v \end{pmatrix}
\prod_{e \in E(X)} \Theta(a_e,b_e,c_e)^{-1} \prod_{f \in F(X)} \U(u_f)
\ee
where $\ev_{\zeta_{r}}$ denotes the evaluation of a rational function of $q$
at $q=\zeta_{r}$, and the sum is over the set of $r$-admissible colorings 
Turaev-Viro prove that the above state-sum is a topological invariant of
$M$, i.e., independent of the ideal triangulation $\calT$. An extension
of the above invariant $\TV_{(X,\gamma)}(\zeta_{r})$ can be defined by
fixing an element $\gamma \in H_1(M,\BZ)$, which determines
a spine $X(\gamma)$ (called an augmented spine in~\cite[Sec.2.2]{FK}).

\subsection{The FKB invariant}
\label{sub.FKB}
  
In~\cite{FK} it was observed that an admissible coloring $c$ of $X$ gives
rise to a surface $\Ss(c)$ of $M$ carried by $X$. These surfaces which follow
the spine and resolve the singularities were called
spinal surfaces in~\cite{FK} and they are carried by the branched surface
$X$. Spinal surfaces can be encoded by their weight coordinates, as is
natural in normal surface theory, and their Haken sum can be defined in such a
way that the sum of their weights is the weight of their sum. Thus, the weight
coordinates of spinal surfaces generate a monoid $\calS(X)$. There is a natural
increasing filtration on $\calS(X)$ where $\calS(X)_N$ denotes the (finite set
of) surfaces with maximum weight at each face at most $N$. The idea
of~\cite{FK} is to use the same building blocks where now $q$ is a complex
number inside the unit disk, and consider the sum
\be
\label{eq.TVNq}
\TV^{(N)}_X(q) = \sum_{\Sigma \in \calS(X)_N} \prod_{v \in V(F)}
\Tet\begin{pmatrix} a_v & b_v & e_v \\ d_v & c_v & f_v \end{pmatrix}
\prod_{e \in E(X)} \Theta(a_e,b_e,c_e)^{-1} \prod_{f \in F(X)} \U(u_f)
\ee
Alas, $\TV^{(N)}_X(q)$ is not a topological invariant (see below).
However, the following is true.

\begin{theorem}
  \cite{FK}
  \label{thm.FK1}
  Fix a 1-efficient ideal triangulation $\calT$ of a 3-manifold $M$ with
  torus boundary components and let $X$ be the dual spine. Then, the
  following limit exists
    \be
    \label{TVlim}
    \IFKB_\calT(q) := \lim_{N\to \infty} \frac{2}{N} \TV^{(N)}_X(q)
    \in \BZ[[q]] \,. 
    \ee
  \end{theorem}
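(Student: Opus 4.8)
The plan is to reduce the existence of the limit to degree estimates for the building blocks, in the same spirit as the reduction behind Proposition~\ref{prop.stab}. Encode $\Sigma\in\calS(X)_N$ by its weight vector $w=(w_f)_{f\in F(X)}$ and write its contribution as $P(\Sigma)=\prod_v\Tet(\cdots)\prod_e\Theta(\cdots)^{-1}\prod_f\U(u_f)$. Each $\Tet$, $\Theta$, $\U$ is, up to a sign and a monomial in $q^{1/2}$, a product of Gaussian binomial coefficients and so lies in $\BZ[q^{1/2},q^{-1/2}]$; moreover $\Theta$ and $\U$ have leading coefficient $\pm1$, so $\Theta^{-1}$ is again an integral Laurent series, whence $P(\Sigma)\in\BZ((q^{1/2}))$. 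The first step is to record the lowest $q$-degree of each block as an explicit function of its colors — for $\Tet$ this is the exponent $\nu$ of~\eqref{eq.nu}, while for $\Theta$ and $\U$ it is elementary — and to sum them into the leading degree $D(\Sigma)$ of $P(\Sigma)$, a piecewise-quadratic function of $w$.

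The essential input is $1$-efficiency, entering through a properness statement for $D$. Consider the total vertex-linking torus $L_0$, which meets every edge of $\calT$ exactly twice and therefore shifts all six colors of every tetrahedron uniformly; this is precisely the uniform shift governed by Proposition~\ref{prop.stab}. I would show that on the solution cone of the normal-surface matching equations for $X$, the ray $\BR_{\ge0}\cdot L_0$ is the \emph{only} direction along which $D$ fails to grow, and that the quadratic part of $D$ is positive definite transverse to it. Geometrically, any other such direction would be an essential normal sphere or a non-peripheral normal torus, and $1$-efficiency — equivalently the existence of an index structure, cf.~\cite{Ga:index,GHRS} — forbids exactly these. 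I expect this to be the main obstacle: it is the one place where the combinatorics of the $6j$-symbol degrees must be matched against the topology, and it is what fails for non-$1$-efficient triangulations, for which $\TV^{(N)}_X$ does not converge even after normalization.

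Granting properness, I would organize the sum by the null ray. Fix an exponent $k$. Properness ensures that only finitely many classes transverse to $\BZ\cdot L_0$ contribute to the coefficient of $q^k$; for each such class with representative $w_0$, the contributing surfaces are the translates $w_0+tL_0$ with maximal weight $\le N$. Along this ray the uniform shift is degree-neutral for $P$ — the shift parts of the $\Tet$ factors cancel against those of $\prod_e\Theta^{-1}\prod_f\U$ — so the coefficients stack rather than escape to high degree, and Proposition~\ref{prop.stab} shows that $\coeff_{q^k}P(w_0+tL_0)$ converges as $t\to\infty$. Since $L_0$ raises the maximal weight by $2$, the number of admissible translates in $\calS(X)_N$ is asymptotic to $N/2$, so $\tfrac{2}{N}\coeff_{q^k}\TV^{(N)}_X(q)$ is a Ces\`aro average of a convergent sequence and hence converges to its limit:
\[
\lim_{N\to\infty}\tfrac{2}{N}\,\coeff_{q^k}\TV^{(N)}_X(q)=\sum_{w_0}\lim_{t\to\infty}\coeff_{q^k}P(w_0+tL_0),
\]
a finite sum, which is the coefficient of $q^k$ in $\IFKB_\calT(q)$. (For several cusps the vertex-linking lattice has higher rank and one averages along it accordingly.)

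It remains to see that the limit lies in $\BZ[[q]]$ rather than in $\BZ((q^{1/2}))$. Integrality of each coefficient is inherited from that of the Gaussian binomials, and in the limit from the manifestly integral $q$-series $(q;q)_\infty/\big((q;q)_{n+a}(q;q)_{n+b}(q;q)_{n+c}\big)$ appearing in $\JD^{\FKB}$ of~\eqref{eq.JFK}. Finally, the vanishing of all half-integer powers is a parity statement: the edge-admissibility constraint $a_e+b_e+c_e\in2\BZ$ forces the total power of $q^{1/2}$ in each $P(\Sigma)$ to be even, so no half-integer powers survive in the limit.
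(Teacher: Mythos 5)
First, a framing remark: the paper itself does not prove Theorem~\ref{thm.FK1} --- it is imported from~\cite{FK}, and the paper only corrects its normalization (Remark~\ref{rem.2N}) --- so your proposal can only be measured against the argument implicit in~\cite{FK} and against the supporting machinery the paper does prove. Your architecture is indeed that one: decompose $\calS(X)_N$ into unpeelable surfaces plus degree-neutral translates, stabilize the building blocks along the translates via Proposition~\ref{prop.stab} (your claim that the degree shifts of the $\Tet$ factors cancel against those of $\prod_e\Theta^{-1}\prod_f\U$ along $L_0$ is correct), count roughly $N/2$ translates, and Ces\`aro-average; this is consistent with Theorem~\ref{thm.FK2} and with the $k=0,\dots,N/2$ structure mentioned in Remark~\ref{rem.2N}. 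However, your key step --- the one you yourself flag as the main obstacle --- is misstated in a way that would make your plan for proving it fail. The quadratic part of the degree $D$ is \emph{not} positive definite transverse to $\BR_{\geq 0}\cdot L_0$: in quad coordinates it is essentially $\sum_j \bigl(a_j^*b_j^*+a_j^*c_j^*+b_j^*c_j^*\bigr)$, which vanishes on every direction in which, in each tetrahedron, the two smallest quad coordinates agree --- in particular on every embedded normal ray (at most one quad type per tetrahedron). Such directions are transverse to $L_0$, whose quad coordinates are zero. Along them the growth of $D$ is governed by the \emph{linear} part, essentially $-\chi/2$, and this is exactly where 1-efficiency must enter: it forbids normal spheres and non-peripheral normal tori, i.e.\ precisely the embedded null rays of the quadratic form with $\chi\geq 0$. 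The correct statement is therefore positive \emph{semi}definiteness, plus strict positivity of the linear term on the null cone away from the vertex-linking directions, plus a compactness argument over the sphere of directions; in substance this is the convergence theorem of~\cite{GHRS} for the 3D-index, not a positive-definiteness estimate. (A minor related slip: $\nu$ in~\eqref{eq.nu} is the leading exponent of the limit $\JD$, not of the finite $\Tet$; the latter is $\delta(\Tet)$ of Lemma~\ref{lem.tropicalblocks}, coming from the extreme term $k=S^*$ of the sum.)

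The second gap is the multi-cusp counting, which you wave off in a parenthesis but which contradicts your own main text. The theorem allows several torus boundary components. Each \emph{individual} vertex-linking torus, not just their union $L_0$, is degree-neutral: it has zero quad coordinates and zero Euler characteristic, so adding it changes neither the limiting weight $E_\infty$ nor, asymptotically, the leading degree, and it is an admissible coloring (parity and triangle inequalities hold). Hence for $c$ cusps the lattice of neutral directions has rank $c$, the number of neutral translates of a fixed surface inside $\calS(X)_N$ grows like $N^c$, and $\TV^{(N)}_X(q)$ grows like $N^c$. Your assertion that $\BR_{\geq 0}\cdot L_0$ is the \emph{only} null ray is then false for $c\geq 2$, and ``one averages along it accordingly'' is incompatible with the normalization $2/N$ you are trying to establish. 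A complete proof must either show that the branched-surface structure of $X$ only carries peels by whole copies of $\partial M$ (so that translates are indexed by a single integer $k$, as in Remark~\ref{rem.2N}), or else restrict to a single cusp; as written, your Ces\`aro argument produces the wrong growth rate whenever $c\geq 2$.
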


\begin{remark}
  \label{rem.2N}
  The limit in~\eqref{TVlim} is a correction of~\cite[Thm.5.1(ii)]{FK}
  where with the notation of~\cite{FK}, one has $k=0,\dots,N/2$.
\end{remark}

The existence of the above limit is only the beginning of a stability
of the coefficients of the sequence $\TV^{(N)}_X(q)$ in the sense of
asymptotic expansions of sequences in the Laurent polynomial ring
$\BZ((q^{\frac{1}{2}}))$ discussed in~\cite{GL:tails}. In examples,
it appears that the sequence $\TV^{(N)}_X(q)$ stabilizes to a quasi-linear
function, i.e., that we have:
\be
\label{eq.TVstab}
  \lim_{N} \TV^{(N)}_X(q) - \frac{N}{2} \IFKB_\calT(q) = \IFKB_{(0),\calT}(q)
  + \IFKB_{(1),\calT}(q) \cdot
  \begin{cases}
    0 & N \,\, \text{even} \\
    1 & N \,\, \text{odd}
  \end{cases}
\ee
where $\IFKB_\calT(q), \IFKB_{(0),\calT}(q)$ and $2 \IFKB_{(1),\calT}(q)
\in \BZ[[q^{1/2}]]$. However, $\IFKB_{(0),\calT}(q)$ and $\IFKB_{(1),\calT}(q)$
depend on the triangulation. For example, for the standard ideal triangulation
of the figure eight knot complement $\calT_{4_1,2}$ with two tetrahedra
(and isometry signature \texttt{cPcbbbiht}) we have
\begin{align*}
\IFKB_{\calT_{4_1,2}}(q) &= 1-2q-3q^2+2q^3+8q^4+18q^5+\dots
\\
\IFKB_{(0),\calT_{4_1,2}}(q) &= 1+4q^2+4q^3-6q^4-36q^5+\dots
\\
2 \IFKB_{(1),\calT_{4_1,2}}(q) &=
-1+2q+3q^2-2q^3-8q^4-18q^5+\dots \,,
\end{align*}
whereas for the geometric triangulation $\calT_{4_1,3}$ of the figure eight
knot complement with three tetrahedra (and isometry
signature~\texttt{dLQbcccdegj}) we have
$$
\IFKB_{\calT_{4_1,3}}(q) = \IFKB_{\calT_{4_1,2}}(q)
$$
as expected but
\begin{align*}
  \IFKB_{(0),\calT_{4_1,3}}(q) &= 1+4q^2+4q^3-6q^4-36q^5+\dots
  \\
2\IFKB_{(1),\calT_{4_1,3}}(q) &= 1+2q+2q^2+8q^3-12q^4-72q^5+\dots 
\end{align*}

The next result of~\cite{FK} identifies the above limit with a generating
series of the monoid of spinal surfaces, modulo the boundary torii. Such
surfaces were called unpeelable in~\cite{FK}. Define the weight
$E_\infty(\Ss)$ of a spinal surface $\Ss$ to be 
\be
\label{eq.Einf}
  E_\infty(\Ss) = (-q^{\frac{1}{2}})^{-\chi(\Ss)} \prod_f 
  \frac{1}{1-q}\prod_v S_\infty
  \begin{pmatrix} a_v & b_v & e_v \\ d_v & c_v & f_v \end{pmatrix}
  \ee
where if $C_1\geq C_2\geq C_3$ the sums of opposite edge weights of the
tetrahedron, $\alpha = \frac{C_1-C_3}{2}$,  $\beta = \frac{C_1-C_2}{2}$,
then
\begin{align}
\label{eq.Sinf}
  S_\infty\begin{pmatrix} a & b & e \\ d & c & f \end{pmatrix}
                                               &=
  (1-q)(q)_\infty\sum_{n=0}^\infty (-1)^{n}
  \frac{q^{\frac{3}{2}n^2+(\alpha+\beta+\frac{1}{2})n+\frac{1}{2}\alpha \beta}}{
    (q)_n(q)_{n+\alpha}(q)_{n+\beta}}
  \\ &=
  (1-q)(q)_\infty \JD^\FKB(S^*_1,S^*_2,S^*_3)
\end{align}
where $S_1 \geq S_2 \geq S_3$ thus $S_3^*=0$ and 
$\alpha = S_1-S_3= \frac{C_1-C_3}{2}$ and $\beta = S_2-S_3 =\frac{C_1-C_2}{2}$.
It follows that for a spinal surface $\Ss$ we have
\be
\label{eq.Sinf2}
E_\infty(\Ss) = (-q^{\frac{1}{2}})^{-\chi(\Ss)} \JD^\FKB(\Ss) 
\ee
where
\be
\label{eq.Sinf3}
\JD^\FKB(\Ss) = \prod_{j=1}^t
\JD^\FKB(a_j,b_j,c_j)
\ee
and $(a_1,b_1,c_1,\dots,a_t,b_t,c_t)$ are the quad coordinates of $\Ss$ and
$t$ is the number of tetrahedra of $\calT$.
Note that $\Ss$ is unpeelable if and only if $\min\{a_j,b_j,c_j\}=0$
for all $j=1,\dots,t$.

\begin{theorem}
  \cite{FK}
  \label{thm.FK2}
    Under the assumptions of Theorem~\ref{thm.FK1}, the limit coincides
    with the generating series of closed unpeelable surfaces carried
    by the spine of $\calT$
    \be
    \label{TVlim2}
    \IFKB_\calT(q) = \sum_{\Ss \, : \, \text{unpeelable}}E_\infty(\Ss) \,.
    \ee
\end{theorem}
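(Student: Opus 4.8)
The plan is to expand the definition \eqref{TVlim} of $\IFKB_\calT(q)$ and reorganize the finite sum \eqref{eq.TVNq} according to the \emph{unpeelable core} of each spinal surface, reducing the theorem to the stabilization statement of Proposition~\ref{prop.stab} together with a Cesàro averaging argument. Recall that the quad coordinates $(a_j,b_j,c_j)$ of $\Ss$ determine the six edge colors at each tetrahedron through \eqref{eq.Sj}, and that a uniform shift of all six edge colors by $2k$ shifts each $S_i$ by $4k$ while leaving the normalized data $S_i^* = S_i - S^*$ fixed; by the translation property \eqref{It} this is exactly the algebraic shadow of peeling, and the parity constraint at each edge forces such shifts to be even, which is why Proposition~\ref{prop.stab} is phrased with shifts $+2N$. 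Thus every spinal surface is obtained from a unique unpeelable core $\Ss_0$ (the one with $\min\{a_j,b_j,c_j\}=0$ for all $j$) by peeling up by some integer $k\ge 0$, and I write $\Ss_0^{(k)}$ for this peeled surface, with normalized data $(S_1^*,S_2^*,S_3^*)$ as its peeling invariant. First I would fix $\Ss_0$ and study its local weight $w(\Ss_0^{(k)}) = \prod_v \Tet \prod_e \Theta^{-1}\prod_f \U$ as $k\to\infty$.

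The stabilization step replaces each building block by the limit supplied by Proposition~\ref{prop.stab}: each $\Tet$ tends to $(-q^{-\frac12})^{\nu(S_1^*,S_2^*,S_3^*)}\frac{1}{(1-q)(q)_\infty^4}\JD^\FKB(S_1^*,S_2^*,S_3^*)$, each $\Theta^{-1}$ contributes $(1-q)(q)_\infty^2$, and each $\U(u_f)$ has leading term $(-q^{\frac12})^{-u_f}\frac{1}{1-q}$. I claim these assemble into $E_\infty(\Ss_0)$ as in \eqref{eq.Einf}. The bookkeeping of the analytic prefactors $(q)_\infty$ and $(1-q)$ is insensitive to the shifts (which produce only monomials in $q^{\frac12}$) and closes up: since the ideal triangulation has $E(X)=2V(X)$ (each tetrahedron contributes four triangular faces, glued in pairs), the explicit powers $(q)_\infty^{-4V+2E}=(q)_\infty^{0}$ from the $\Tet$ and $\Theta$ factors combine with the $V$ surviving copies of $\JD^\FKB$ to match $\prod_v S_\infty$ via the identity \eqref{eq.Sinf} relating $S_\infty$ and $\JD^\FKB$, while the factor $(1-q)^{-1}$ from each $\U$ supplies $\prod_f \frac{1}{1-q}$.

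The crux is the power of $-q^{\frac12}$. After reincorporating the shifts hidden in the hatted blocks $\wh\Theta,\wh\Tet$, the surviving leading exponent of $(-q^{\frac12})$ in $\lim_k w(\Ss_0^{(k)})$ equals $-\sum_v \nu(S_1^*,S_2^*,S_3^*) - \sum_f u_f$ up to these shift corrections. The content of the stabilization is then the local-to-global identity that this total exponent equals $-\chi(\Ss_0)$; this Gauss--Bonnet-type statement, expressing the Euler characteristic of the carried surface in terms of its quad coordinates via \eqref{eq.nu}, is the main obstacle, and I would prove it by a direct cell-count of the CW structure induced on $\Ss_0$ by the spine. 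Granting it, one obtains $\lim_{k\to\infty} w(\Ss_0^{(k)}) = E_\infty(\Ss_0)$.

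Finally I would perform the Cesàro averaging. A peeled surface $\Ss_0^{(k)}$ lies in $\calS(X)_N$ precisely when $k$ does not exceed a bound $K_N(\Ss_0)= N/2+O(1)$ (cf. Remark~\ref{rem.2N}), since peeling up by $k$ raises the maximal face weight by a fixed multiple of $k$. Hence
\be
\frac{2}{N}\,\TV^{(N)}_X(q) = \sum_{\Ss_0 \,:\, \text{unpeelable}} \frac{2}{N}\sum_{k=0}^{K_N(\Ss_0)} w(\Ss_0^{(k)}) \,,
\ee
and for each fixed core the inner expression is a Cesàro mean of the convergent sequence $w(\Ss_0^{(k)})\to E_\infty(\Ss_0)$, so it tends to $E_\infty(\Ss_0)$ as $N\to\infty$ because $\frac{2}{N}(K_N(\Ss_0)+1)\to 1$. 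Interchanging the limit with the sum over cores is justified because the leading exponent $\nu$ of \eqref{eq.nu} grows with the size of the quad coordinates, so $E_\infty(\Ss_0)$ has $q$-adic valuation tending to infinity as the complexity of $\Ss_0$ grows; this is exactly where $1$-efficiency enters, guaranteeing that only finitely many cores contribute to each power of $q$ and controlling the edge effects from cores of size comparable to $N$. Summing over all unpeelable $\Ss_0$ then yields \eqref{TVlim2}.
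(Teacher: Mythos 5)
A preliminary remark: the paper itself does not prove Theorem~\ref{thm.FK2}. The statement is quoted from~\cite{FK} (with the correction recorded in Remark~\ref{rem.2N}) and is used as an input to Theorem~\ref{thm.1}, so your proposal can only be measured against the ingredients the paper records, not against a proof in the text. Your architecture --- fiber the finite sum \eqref{eq.TVNq} over unpeelable cores, show that the weight $w(\Ss_0^{(k)})$ of the $k$-fold peel converges to $E_\infty(\Ss_0)$, then absorb the factor $2/N$ by a Ces\`aro average over $k=0,\dots,N/2$ --- is the natural reading of Remark~\ref{rem.2N}, and your bookkeeping of the $(q)_\infty$ and $(1-q)$ prefactors (using $V(X)=t$, $E(X)=2t$, $F(X)=t$) is correct. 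But the two places where you defer or wave are exactly where the content of the theorem lies. The exponent identity you call the crux is not a supporting lemma; it is the theorem. Under peeling, the individual blocks do not converge: their lowest $q$-exponents diverge quadratically in $k$ (the degree of $\Tet$ drops like $-3k^2$, while each of the two $\Theta^{-1}$ per tetrahedron contributes $+\tfrac32 k^2$), and only the global cancellation over the spine leaves a finite exponent, which must then be identified with $-\chi(\Ss_0)$ corrected by the $\nu$'s of \eqref{eq.nu}. You cannot simply lean on Lemma~\ref{lem.tropicalblocks} for this: as printed its formulas are not consistent with the definition of $\delta$ as lowest degree (for instance it gives $\delta(\Theta)(2,2,2)=+3$ whereas $\Theta(2,2,2)=-[2][3][4]$ has lowest degree $-3$, and the extremal term of the $\Tet$ sum is $k=S^*$, not $k=T^+$), so the cancellation and the Gauss--Bonnet identity require a careful computation that your ``direct cell-count'' only gestures at. Without it you do not even know that $w(\Ss_0^{(k)})$ converges, and the Ces\`aro step has nothing to average.

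The second gap is the interchange of $\lim_N$ with the sum over cores. Your justification --- that the leading exponent $\nu$ of \eqref{eq.nu} grows with the size of the quad coordinates --- is false: $\nu(a,0,0)=0$ for every $a\geq 0$, so there are cores of arbitrarily large weight all of whose local $\nu$'s vanish. What must tend to infinity is the valuation of $E_\infty(\Ss_0)$, i.e.\ the combination of the $\nu$'s with $-\chi(\Ss_0)$, and the properness of this combination is precisely the nontrivial degree estimate where 1-efficiency enters; it is the same estimate that makes the right-hand side of \eqref{TVlim2}, equivalently the 3D-index sum \eqref{eq.3Dnormal}, a well-defined element of $\BZ[[q^{1/2}]]$ (cf.~\cite{GHRS},~\cite{Ga:normal}). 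Moreover, to exchange the limit with an infinite sum you need such valuation bounds uniformly in $N$, for the finite peels $w(\Ss_0^{(k)})$ and not only for their limits. Finally, your single-integer peeling parameter presumes that the only spinal surfaces with a given core are $\Ss_0$ plus $k$ copies of the whole boundary $\partial M$ pushed onto the spine; when $\partial M$ has several tori this fails, since a single vertex-linking torus is itself an admissible spinal surface with vanishing quad coordinates, so the fiber over a core is indexed by $\BN^{c}$ ($c$ the number of boundary components) and the claim that $2/N$ is the correct normalization then needs an argument you have not supplied.
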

It is possible to extend Theorems~\ref{thm.FK1} and~\ref{thm.FK2} using
an element $\gamma \in H_1(\partial M,\BZ)$.
Consider the augmented spine $X(\gamma)$. Then one can define
$\TV^{(N)}_X(\gamma)(q)$ and the corresponding limit
$\IFKB_\calT(\gamma)(q)$ exists and is identified with the generating series
of unpeelable surfaces $\Ss$ with boundary $\gamma$.  


\section{Proofs}
\label{sec.proofs}

\subsection{Stabilization of the building blocks}
\label{sub.stab}

In this section we prove some stabilization properties of the building
blocks of quantum spin networks, using elementary degree estimates, in
the spirit of~\cite{GL:tails}, where the stabilization of the coefficients
of the colored Jones polynomial of an alternating knot was proven, giving
rise to a sequence of $q$-series, the first of which is known as the
tail of the colored Jones polynomial. 

We begin by expressing the building blocks of Section~\ref{sub.blocks}
in terms of the quantum
factorial $(q;q)_n$ where $(qx;q)_n=\prod_{j=1}^n (1-q^j x)$ for $n$
a nonnegative integer. We have:
$$
[n] = q^{-\frac{n-1}{2}} \frac{1-q^n}{1-q}, \qquad
[n]!= q^{-\frac{n(n-1)}{4}} \frac{(q;q)_n}{(1-q)^n}
$$
and
$$
\qbinom{a}{a_1, a_2, \dots, a_r}=\frac{[a]!}{[a_1]! \dots [a_r]!} =
q^{-\frac{1}{4}(a^2-\sum_{j=1}^r a_j^2)}
\frac{(q;q)_a}{(q;q)_{a_1} \dots (q;q)_{a_r}} \,.
$$
and
\be
\label{6jb}
(\Tet)\begin{pmatrix} a & b & e \\ d & c & f \end{pmatrix} =
\sum_{k=T^+}^{S^*} (-1)^k \frac{1-q^k}{1-q} q^{
  \delta(\Tet)\begin{pmatrix} a & b & e \\ d & c & f \end{pmatrix}}
\frac{(q;q)_k}{\prod_{i=1}^3 (q;q)_{S_i-k} \prod_{j=1}^4 (q;q)_{k-T_j}} 
  \ee
  where $\delta(\Tet)\begin{pmatrix} a & b & e \\ d & c & f \end{pmatrix}$
  is defined in Lemma~\ref{lem.tropicalblocks} below and $S_i$ and $T_j$
  are given in Equations \eqref{eq.Sj} and \eqref{eq.Ti}.
  
Since $(q;q)_n \in \BZ[q]$ is a polynomial with constant term 1, it follows
that $1/(q;q)_n \in \BQ(q) \cap \BZ[[q]]$. The building blocks are rational
functions of $q$ with denominators products of cyclotomic polynomials, hence
they are well-defined elements of the Laurent polynomial ring $\BZ((q))$.
If $f(q) \in \BZ((q))$ we will denote by $\lt(f)q^{\delta(f)}$ the monomial
with the lowest power of $q$ appearing in the Laurent expansion of $f(q)$,
and we will denote $\wh f(q) = \lt(f)^{-1}q^{-\delta(f)} f(q)$ the shifted
series, which, when $\lt(f) = \pm 1$, is an element of $1+q\BZ[[q]]$. 

Note that our notation differs slightly from Section 2 of~\cite{GV:2fusion},
where we studied the leading terms of the building blocks with the aim
of computing the degree of the colored Jones polynomial.

The next lemma is elementary (see~\cite[Lem.2.4]{GV:2fusion}).

\begin{lemma}
\label{lem.tropicalblocks}
For all admissible colorings we have:
\begin{align*}
\lt(\U)(a)&=(-1)^a  \\
\lt(\Theta)(a,b,c)&= (-1)^{\frac{a+b+c}{2}}
\\
\lt(\Tet)\begin{pmatrix} a & b & e \\ d & c & f \end{pmatrix}&= (-1)^{ T^+}
\end{align*}
and
\begin{align*}
\d(\U)(a)&= \frac{a}{2} \\
\d(\Theta)(a,b,c)&= -\frac{1}{8}(a^2+b^2+c^2)+\frac{1}{4}(ab+ac+bc)
+\frac{1}{4}(a+b+c)
\\
  \d(\Tet)\begin{pmatrix} a & b & c \\ d & e & f \end{pmatrix}&=
\frac{1}{4} \left(-(T^+)^2 + \sum_i (S_i -T^+)^2 + \sum_j (T^+-T_j)^2\right)
- \frac{T^+}{2}
\end{align*}
where $S_j$ and $T_i$ are given in Equations \eqref{eq.Sj} and \eqref{eq.Ti}.
\end{lemma}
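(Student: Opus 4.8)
The plan is to reduce the whole statement to the three conversion formulas for $[n]$, $[n]!$ and the quantum multinomial that are recorded just above, and then simply read off the extremal monomial. The key elementary input is that every $(q;q)_n$ is a polynomial with constant term $1$, and every balanced quantum multinomial $\qbinom{a}{a_1,\dots,a_r}$ is a palindromic Laurent polynomial whose top and bottom coefficients are both $+1$; the same holds for each $[n]$. Consequently, once a building block is written in the normalized form (explicit sign)$\cdot q^{(\text{explicit exponent})}\cdot(\text{a }(q;q)\text{-ratio with unit extreme coefficient})$, its extremal monomial is the product of the sign and the explicit power of $q$: within a single such product no cancellation is possible, because the extremal coefficient of the product is the product of the extremal coefficients, which is a nonzero unit. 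Thus the entire computation becomes bookkeeping of the exponents contributed by the prefactors $q^{-(n-1)/2}$ in $[n]$, $q^{-n(n-1)/4}$ in $[n]!$, and $q^{-\frac14(a^2-\sum_j a_j^2)}$ in the multinomial.

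First I would dispatch $\U$ and $\Theta$, each of which is a \emph{single} such product. For $\U(a)=(-1)^a[a+1]$ the sign $(-1)^a$ and the exponent are immediate. For $\Theta$, after substituting the factorial and multinomial conversions I would simplify the resulting exponent using the identity $\big(\sum_i a_i\big)^2-\sum_i a_i^2=2\sum_{i<j}a_ia_j$ applied to the balanced parts $\tfrac{-a+b+c}{2},\tfrac{a-b+c}{2},\tfrac{a+b-c}{2}$, which sum to $\tfrac{a+b+c}{2}$; this collapses to the stated quadratic $\d(\Theta)$, with sign $(-1)^{(a+b+c)/2}$. These two cases are routine and involve no subtlety.

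The real work, and the main obstacle, is $\Tet$, which by \eqref{6jb} is a \emph{sum} over $T^+\le k\le S^*$. The $k$-th summand is $(-1)^k$ times a product of $(q;q)$-factors carrying an explicit power of $q$, so its extremal exponent is a quadratic $P(k)=-\tfrac{k}{2}+\tfrac14\big(-k^2+\sum_i(S_i-k)^2+\sum_j(k-T_j)^2\big)$ in $k$, with positive leading coefficient $\tfrac32$; one checks directly that the claimed $\d(\Tet)$ equals $P(T^+)$ and that the claimed sign $(-1)^{T^+}$ is the sign of the $k=T^+$ term. The statement therefore reduces to showing that the relevant leading monomial of the whole sum is carried by the single boundary index $k=T^+$, with every other summand strictly less extreme so that the leading coefficient $(-1)^{T^+}$ survives. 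This is the delicate point: since $P$ is convex, its extreme value over the integer interval $[T^+,S^*]$ is attained at an endpoint, and one must invoke the admissibility (triangle) inequalities at the four vertices — which bound the $T_j$ against the $S_i$ and force $T^+\le S^*$ — both to single out the endpoint $k=T^+$ and to rule out cancellation against neighbouring terms. I expect this endpoint-dominance and non-cancellation argument, rather than the exponent algebra, to be the crux; it is exactly the degree-estimate technique of~\cite{GL:tails} and~\cite[Lem.2.4]{GV:2fusion}, which I would cite to finish.
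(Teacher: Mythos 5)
Your framework for $\U$ and $\Theta$ is fine, and you correctly identified the crux of the $\Tet$ case (the paper itself gives no argument here beyond citing \cite[Lem.2.4]{GV:2fusion}; the real content only surfaces inside the proof of Proposition~\ref{prop.stab}). But your resolution of that crux is wrong. Convexity of $P(k)=-\tfrac{k}{2}+\tfrac14\bigl(-k^2+\sum_i(S_i-k)^2+\sum_j(k-T_j)^2\bigr)$ places the \emph{maximum} of $P$ over an interval at an endpoint; its \emph{minimum} --- which is what governs the lowest power of $q$ --- sits at the integer nearest the vertex of the parabola. Here the vertex is $k^*=\tfrac13\bigl(S_1+S_2+S_3+\tfrac12\bigr)$, and since $S^*=\min_i S_i\le\tfrac13(S_1+S_2+S_3)<k^*$, the entire summation range $[T^+,S^*]$ lies strictly to the left of the vertex: explicitly, $P(k)-P(k+1)=S_1+S_2+S_3-3k-1\ge 2$ for $T^+\le k<S^*$. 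So $P$ is strictly decreasing on the range, the minimal $q$-degree is attained \emph{uniquely at $k=S^*$}, not at $k=T^+$ (uniqueness also gives non-cancellation for free --- no triangle-inequality argument is needed beyond $T^+\le S^*$). Your plan to ``single out the endpoint $T^+$'' cannot be completed by any estimate, because it is false.

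Carried out correctly, your computation yields $\lt(\Tet)=(-1)^{S^*}$ and $\delta(\Tet)=-\tfrac{S^*}{2}+\tfrac14\bigl(-(S^*)^2+\sum_i(S_i-S^*)^2+\sum_j(S^*-T_j)^2\bigr)$, i.e.\ the printed formulas with $T^+$ replaced by $S^*$; the lemma as stated is incorrect. Concretely, for $a=b=c=d=e=f=2$ one has $T^+=3$, $S^*=4$, and $\Tet=[4][3][2]([5]-1)$, whose extremal monomial is $+q^{-5}=(-1)^{S^*}q^{P(S^*)}$, whereas the lemma predicts $(-1)^{T^+}q^{P(T^+)}=-q^{-3}$. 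The corrected form is what the paper actually uses: in the proof of Proposition~\ref{prop.stab} the change of variables $k=S^*-\ell$ makes $\ell=0$ (that is, $k=S^*$) the term carrying the constant coefficient $1$ of $\wh{\Tet}$, and the sentence there asserting the minimum is at $k=T^+$ is the same slip. A milder instance of the same carelessness appears in your easy cases: with the paper's definition of $\delta$ as the \emph{lowest} power, $\delta(\U)(a)=-a/2$ and $\delta(\Theta)$ is the negative of the printed expression (the printed ones are the top degrees); since $\U$, $\Theta$, $\Tet$ are palindromic under $q\leftrightarrow q^{-1}$ this is only a sign convention, but your ``read off the exponent'' step must say which end of the Laurent expansion it is reading, and for $\Tet$ the two ends are controlled by the same index $k=S^*$, never by $T^+$.
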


We have all the ingredients to give a proof of Proposition~\ref{prop.stab}.

\begin{proof}(of Proposition~\ref{prop.stab}) 
  The first identity follows from the fact that
  $$
  \wh \Theta(a,b,c) =
  \frac{1-q^{\frac{a+b+c}{2}+1}}{1-q} \frac{(q;q)_{\frac{a+b+c}{2}}}{
    (q;q)_{\frac{-a+b+c}{2}}(q;q)_{\frac{a-b+c}{2}}(q;q)_{\frac{a+b-c}{2}}}
  $$
  and the fact that
  \be
  \label{basic}
  \lim_N q^{\kappa + \lambda N}=0, \qquad 
  \lim_N (q;q)_{\kappa' + \lambda N} = (q;q)_\infty
  \ee
  for
  integers $\kappa$, $\kappa'$ and $\lambda$ with $\lambda >0$.

  For the second identity, 
  the sum over $k$ (with $T^+ \leq k \leq S^*$) in~\eqref{6jb}
  achieves the minimum $q$-degree uniquely at $k=T^+$. After changing
  variables to $k=S^*-\ell$ it follows that 
  $$
  \wh \Tet \begin{pmatrix} a & b & c
    \\ d & e & f \end{pmatrix}
  = \frac{1}{1-q} \sum_{\ell=0}^{S^*-T^+}
  (-1)^\ell (1-q^{S^*-\ell}) q^{\frac{1}{2}\ell(3\ell+1)+\ell(S_1^*+S_2^*+S_3^*)}
  \frac{(q;q)_{S^*-\ell}}{\prod_i (q;q)_{S_i^*+\ell}
    \prod_j (q;q)_{T_j^*-\ell}}
  $$
  where $S_i^*=S_i-S^*$ and $T_j^*=S^*-T_j$. It follows that for all natural
  numbers $N$, we have
\begin{multline}
  \wh \Tet \begin{pmatrix} a+2N & b+2N & c+2N
    \\ d+2N & e+2N & f+2N \end{pmatrix}
  = \\  \frac{1}{1-q} \sum_{\ell=0}^{N+S^*-T^+}
  (-1)^\ell (1-q^{4N+S^*-\ell}) q^{\frac{1}{2}\ell(3\ell+1)+\ell(S_1^*+S_2^*+S_3^*)}
  \frac{(q;q)_{4N+S^*-\ell}}{\prod_i (q;q)_{S_i^*+\ell}
    \prod_j (q;q)_{N+T_j^*-\ell}} \,.
\end{multline}
Equation~\eqref{basic} applied to each fixed $\ell$ implies that
\begin{multline}
\lim_N \wh \Tet \begin{pmatrix} a+2N & b+2N & c+2N
    \\ d+2N & e+2N & f+2N \end{pmatrix}
  = \\  \frac{1}{1-q} \sum_{\ell=0}^{\infty}
  (-1)^\ell q^{\frac{1}{2}\ell(3\ell+1)+\ell(S_1^*+S_2^*+S_3^*)}
  \frac{(q;q)_{\infty}}{\prod_i (q;q)_{S_i^*+\ell}
    \prod_j (q;q)_{\infty}} 
\end{multline}
and this concludes the proof.
\qed

\subsection{The tetrahedron index as a limit of $q$-6j-symbols}
\label{sub.tet6j}

In this section we give a proof of Proposition~\ref{prop.1}.
Observe that $\JD^\FKB(a,b,c)$ is symmetric under all permutations of
$(a,b,c)$. Moreover, we claim that it satisfies the translation
property~\eqref{It}. Indeed, using the definition of $\JD^\FKB$
as a sum over the integers~\eqref{eq.JFK}, it follows that
\begin{align*}
\JD^{\FKB}(a+s,b+s,c+s) &=
                          (q)_\infty
                          \\ & \cdot \sum_{n} (-1)^n 
                          \frac{q^{\frac{1}{2}n(3n+1) +n(a+s+b+s+c+s)
                          +\frac{1}{2}((a+s)(b+s)+(b+s)(c+s)+(c+s)(a+s))}}{
  (q;q)_{n+a+s}(q;q)_{n+b+s}(q;q)_{n+c+s}}
  \\
                        &=
                          (-q^{\frac{1}{2}})^s
                          (q)_\infty
\sum_m (-1)^m \frac{q^{\frac{1}{2}m(3m+1) +m(a+b+c)+\frac{1}{2}(ab+ac+bc)}}{
  (q;q)_{m+a}(q;q)_{m+b}(q;q)_{m+c}}
\end{align*}
where in the first equality we shifted variables to $n+s=m$.

Since both sides of~\eqref{eq.prop1} satisfy the translation
property~\eqref{It} and are symmetric in $(a,b,c)$, to prove the said
equation, it suffices to assume that $a \geq b \geq c=0$. 
We will use the following identities
  \begin{align}
    (qx;q)_\infty &=
\sum_{n=0}^\infty (-1)^n \frac{q^{\frac{n(n+1)}{2}}x^n}{(q)_n}
\\                    
\frac{1}{(q)_m(q)_n} &=
    \sum_{\substack{r,s,t\geq 0 \\ r+s=m, s+t=n}}
    \frac{q^{rt}}{(q)_r(q)_s(q)_t}
   \end{align}                    
whose proofs may be found for example, in Equations (7) and (13) of Section
D of~\cite{Za:dilogarithm}.
  We have:
  \begin{align*}
    (q)_\infty\sum_{k=0}^\infty (-1)^k
    \frac{q^{\frac{3}{2}k^2+(a+b+\frac{1}{2})k}}{
    (q)_k(q)_{k+a}(q)_{k+b}} &=
\sum_{k} (-1)^k
\frac{q^{\frac{3}{2}k^2+(a+b+\frac{1}{2})k}}{(q)_k(q)_{k+a}}
(q^{k+b+1};q)_\infty
    \\
&=    
\sum_{k,\ell} (-1)^{k+\ell}
         \frac{q^{\frac{3}{2}k^2+(a+b+\frac{1}{2})k+\frac{\ell(\ell+1)}{2}}}{(q)_k(q)_{k+a}(q)_\ell}q^{(k+b)\ell}
         \\
&=
\sum_{n}\sum_{\substack{k+a+\ell = n+a \\ n=k+\ell}} (-1)^n
    \frac{q^{k(k+a)}q^{\frac{1}{2}n^2+\frac{n}{2}+b n}}{
    (q)_k(q)_\ell(q)_{k+a}} \\
&=
\sum_n (-1)^n \frac{q^{\frac{1}{2}n^2+\frac{n}{2}+b n}}{(q)_n(q)_{n+a}}
\\
&= q^{-\frac{1}{2}a b} \ID(-b,a) \,.
  \end{align*}
  It follows that $I^\FKB(a,b,0)=\ID(-b,a)=\JD(b,a,0)=\JD(a,b,0)$, which
  concludes the proof of the proposition.
\end{proof}

\subsection{Proof of Theorem~\ref{thm.1}}
\label{sub.thm1}

Fix a 1-efficient ideal triangulation $\calT$ with spine $X$. Recall
the generalized normal surfaces of~\cite{GHRS} and~\cite[Sec.10]{Ga:normal}.
Each generalized normal surface $S$ has weight $I(S)$ given
by~\cite[Eqn.(25)]{Ga:normal}
\be
I(S) = (-q^{\frac{1}{2}})^{-\chi(\Ss)}
\prod_{j=1}^t \JD(a_j,b_j,c_j)
\ee
where $(a_1,b_1,c_1,\dots,a_t,b_t,c_t)$ are the quad coordinates of $\Ss$ and
$t$ is the number of tetrahedra of $\calT$. 

\begin{lemma}
  \label{lem.bijection}
There is a bijection between the closed generalized normal surfaces of $\calT$
and the closed unpeelable spinal surfaces of $X$. 
If $S$ is a generalized normal surface and $\Ss$ is the corresponding
unpeelable surface, then
\be
\label{eq.Imatch}
I(S) = E_\infty(\Ss) \,.
\ee
\end{lemma}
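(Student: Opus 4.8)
The plan is to treat the quadrilateral coordinates as the common bookkeeping device for both families of surfaces, so that both the bijection and the weight identity~\eqref{eq.Imatch} reduce to facts already recorded in the excerpt together with Proposition~\ref{prop.1}. First I would set up the local dictionary at a single tetrahedron. An unpeelable spinal surface $\Ss$ is recorded by the weights on the faces $F(X)$ of the spine, equivalently by the six edge weights $(a_v,b_v,e_v,d_v,c_v,f_v)$ at each vertex $v$ of $X$ dual to the tetrahedron $\Delta_j$. From these one forms the three half-sums $S_1,S_2,S_3$ of opposite edge weights as in~\eqref{eq.Sj}, and the associated quad triple is $(S_1^*,S_2^*,S_3^*)$ with $S_i^*=S_i-\min_k S_k$; this is exactly the triple $(a_j,b_j,c_j)$ entering the weight $I(S)$ of a generalized normal surface. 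I would then note that the unpeelable condition $\min\{a_j,b_j,c_j\}=0$ for every $j$ is precisely the \emph{quadrilateral condition} of normal surface theory, namely that at most two of the three quadrilateral types occur in each tetrahedron, so the two admissibility conditions coincide tetrahedron by tetrahedron.

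Next I would promote this local dictionary to a global bijection. The admissible colorings of $X$ are cut out by the edge conditions of $X$ (even sum and triangle inequalities along each edge of $X$, equivalently each face of $\calT$), and I would verify that, under the substitution~\eqref{eq.Sj}, these conditions transform into exactly the $Q$-matching equations satisfied by the quad coordinates of a generalized normal surface across the faces of $\calT$. Consequently the set of admissible quad vectors $(a_1,b_1,c_1,\dots,a_t,b_t,c_t)$ obeying the unpeelable/quadrilateral condition is literally the same on both sides. Since a generalized normal surface is determined by its quad coordinates up to the addition of vertex-linking tori, and the unpeelable hypothesis selects the representative with no such torus summands, the quad vector is a complete invariant of a closed unpeelable spinal surface as well; assigning to each surface its quad vector therefore gives the claimed bijection.

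It then remains to match Euler characteristics and to invoke Proposition~\ref{prop.1}. Corresponding surfaces are isotopic, or at the very least have $\chi$ given by the same linear expression in the quad coordinates via the additive normal-surface count, the per-tetrahedron contributions agreeing under spine/triangulation duality; in either case $\chi(S)=\chi(\Ss)$. The weight identity is then immediate: by~\eqref{eq.Sinf2} and~\eqref{eq.Sinf3} we have $E_\infty(\Ss)=(-q^{\frac12})^{-\chi(\Ss)}\prod_{j=1}^t \JD^\FKB(a_j,b_j,c_j)$, while by definition $I(S)=(-q^{\frac12})^{-\chi(S)}\prod_{j=1}^t \JD(a_j,b_j,c_j)$; with the matched quad coordinates and $\chi(S)=\chi(\Ss)$, Proposition~\ref{prop.1} gives $\JD^\FKB(a_j,b_j,c_j)=\JD(a_j,b_j,c_j)$ for each $j$, and~\eqref{eq.Imatch} follows.

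The main obstacle is the middle step. Everything hinges on checking that the edge-admissibility relations on the spine $X$ become, under the opposite-edge-sum substitution~\eqref{eq.Sj}, precisely the $Q$-matching equations of normal surface theory, and that unpeelable spinal surfaces are in sharp bijection with admissible quad vectors with no loss of information beyond the deliberately discarded vertex-linking tori. By comparison the Euler-characteristic identity is routine, and once the combinatorial bijection is established the weight identity~\eqref{eq.Imatch} is a purely formal consequence of Proposition~\ref{prop.1}.
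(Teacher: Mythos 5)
Your proposal shares the paper's skeleton: use quad coordinate vectors in $\BN^{3t}$ with $\min\{a_j,b_j,c_j\}=0$ in each tetrahedron as the common bookkeeping, and then deduce \eqref{eq.Imatch} termwise from \eqref{eq.Sinf2}, \eqref{eq.Sinf3} and Proposition~\ref{prop.1}; that final step is correct and is exactly what the paper does. The gap is the middle step, which you rightly single out as the crux but then both leave unverified and aim at the wrong target. You propose to check that the edge-admissibility relations on $X$ become ``precisely the $Q$-matching equations'' so that the two sides carve out ``literally the same'' set of quad vectors. This is not the right statement, and if you carried out the check it would fail. The generalized normal surfaces of \cite[Sec.7]{Ga:normal} are by definition the classes in $(\BE+\BT)/\BT$, where $\BE$ and $\BT$ are the sublattices of $Q$-matching solutions spanned by the edge solutions and the tetrahedral solutions; this is in general a \emph{proper} sublattice of the full $Q$-matching solution lattice. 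Indeed $\BE+\BT$ is generated by $2t$ vectors, while the $Q$-matching equations (one per edge of $\calT$, and summing to zero since each quad's shifts cancel) have rank at most $t-1$, so their solution lattice has rank at least $2t+1$. Likewise the unpeelable spinal surfaces cannot realize all min-normalized $Q$-matching solutions, since a spinal surface is determined by a coloring of the $t$ faces of $X$; note also that your proposed dictionary does not even match indices, as admissibility is imposed at the $2t$ faces of $\calT$ (edges of $X$) whereas the $Q$-matching equations are indexed by the $t$ edges of $\calT$. The paper instead obtains the bijection by normalization and citation: since $\BT$ is generated by the vectors $(a_j,b_j,c_j)=\delta_{j,\ell}(1,1,1)$, every class in $(\BE+\BT)/\BT$ has a unique representative in $\BN^{3t}$ with $\min\{a_j,b_j,c_j\}=0$ for all $j$, and by \cite{FK} the closed unpeelable spinal surfaces are uniquely encoded by the same vectors; identifying either side with ``$Q$-matching solutions with $\min=0$'' would overshoot, and the later appeal to \cite[Cor.8.2]{Ga:normal} in the proof of Theorem~\ref{thm.1} would no longer connect.

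A second, related error: you assert that a generalized normal surface is determined by its quad coordinates ``up to the addition of vertex-linking tori,'' with unpeelability discarding torus summands. Vertex-linking tori consist entirely of triangles, so their quad coordinates vanish identically and they are invisible in this bookkeeping altogether; the ambiguity that the $\min=0$ normalization removes is addition of \emph{tetrahedral} solutions, and correspondingly FKB's peeling changes the quad vector by $(1,1,1)$ in a single tetrahedron (this is the content of the remark preceding Theorem~\ref{thm.FK2}). For the same reason, calling $\min\{a_j,b_j,c_j\}=0$ ``the quadrilateral condition of normal surface theory'' is misleading: the classical condition for embedded normal surfaces allows at most one quad type per tetrahedron, whereas $\min=0$ is merely the choice of canonical representative modulo $\BT$. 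Your handling of the Euler characteristic is no less explicit than the paper's, so I would not count that as an additional gap; but the bijection step, as you have planned it, is a genuine one.
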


\begin{proof}
Using the notation
of~\cite[Sec.7]{Ga:normal}, the closed generalized normal surfaces of $\calT$
are given by $Q_0(\calT,\BZ)/\BT=(\BE+\BT)/\BT$ where $\BE$ and $\BT$ are
the subspaces of integer solutions to the normal surface equations generated
by the edges and the tetrahedra of $\calT$, respectively. Every element of
$Q_0(\calT,\BZ)$ is encoded by a vector $(a_1,b_1,c_1,\dots,a_t,b_t,c_t)
\in \BZ^{3t}$ of quadrilateral coordinates where $t$ is the number of
tetrahedra of $\calT$. Moreover, the tetrahedral solution to the gluing
equations corresponding to the $\ell$-th tetrahedron is the $3t$ vector of
integers with coordinates $(a_j,b_j,c_j)=\delta_{j,\ell}(1,1,1)$ where
$\delta_{j,\ell}=1$ if $j=\ell$ and $0$ otherwise. Thus, every
generalized normal surface $S \in (\BE+\BT)/\BT$ has coordinate vector
$(a_1,b_1,c_1,\dots,a_t,b_t,c_t) \in \BN^{3t}$ satisfying
$\min\{a_j,b_j,c_j\}=0$ for all $j=1,\dots,t$. And conversely, every
such vector corresponds to a unique generalized normal surface. On the
other hand, every unpeelable closed surface is uniquely described by its
quad coordinate vector $(a_1,b_1,c_1,\dots,a_t,b_t,c_t) \in \BN^{3t}$ satisfying
$\min\{a_j,b_j,c_j\}=0$ for all $j=1,\dots,t$, and all such vectors give
rise to unpeelable surfaces. This concludes the first part of the lemma.
The second part, i.e., Equation~\eqref{eq.Imatch} follows from
Equations~\eqref{eq.Sinf2}, \eqref{eq.Sinf3} and Proposition~\ref{prop.1}.
\end{proof}

When $(m,e)=0$, Theorem~\ref{thm.1} follows from Theorem~\ref{thm.FK2},
Lemma~\ref{lem.bijection} and the fact that the 3D-index is given
by~\cite[Cor.8.2]{Ga:normal}
\be
\label{eq.3Dnormal}
I_\calT(0,0)(q)=\sum_S I(S)
\ee
where the sum is over the set of generalized normal surfaces. When $\gamma
\in H_1(\partial M, \BZ)$, one uses the obvious extension of
Lemma~\ref{lem.bijection} along with the extension of Theorem~\ref{thm.FK2}
combined with~\cite[Def.8.1]{Ga:normal}. This concludes the proof of
the theorem.
\qed


\section*{Acknowledgments}

The second author wishes to thank the International Mathematics Center
at SUSTech University, Shenzhen for their hospitality. The authors wish to
thank Banff for inviting us at the conference on Modular Forms and 
Quantum Knot Invariants in March 2018 where the results were conceived. 


\bibliographystyle{hamsalpha}
\bibliography{biblio}
\end{document}